\documentclass[12pt,twoside,a4paper,reqno]{amsart}
\usepackage{amsmath,amsthm,amssymb,amscd,amsfonts}
\usepackage{enumerate}
\makeatletter
\@namedef{subjclassname@2010}{%
  \textup{2010} Mathematics Subject Classification}
\makeatother
 \textwidth=15.67cm \textheight=9.0in \topmargin=0.mm
\headheight=0.mm \oddsidemargin=2.0mm \evensidemargin=2.0mm

\theoremstyle{definition}

\thispagestyle{empty}

\topmargin=-0.5cm \textheight=24cm \textwidth=15cm \hoffset=-1cm

\newtheorem{theo}{Theorem}[section]

\newtheorem{prop}[theo]{Proposition}
\theoremstyle{definition}

\theoremstyle{definition}

\newtheorem{rem}[theo]{Remark}
\numberwithin{equation}{section}

\title{ pointwise amenable, non-amenable Banach algebras }

\author{Sara Behnamian}
\address{Department of Mathematics, Science and research Branch, Islamic Azad University, Tehran, Iran, e-mail: {\tt sara.behnamian@srbiau.ac.ir}}

\author{Amin Mahmoodi}
\address{Department of Mathematics, Central Tehran Branch, Islamic Azad University, Tehran, Iran, e-mail: {\tt a\_mahmoodi@iauctb.ac.ir}}

\begin{document}
\pagestyle{headings}

\begin{abstract}
It is shown that a pointwise amenable Banach algebra need not be
amenable. This positively answer a question raised by Dales,
Ghahramani and Loy.
\end{abstract}

\maketitle Keywords: amenable, pointwise amenable, Connes amenable,
character amenable.

MSC 2010: Primary: 46H25; Secondary:  43A07.
\section{Introduction}
Let $\mathfrak{A}$ be a Banach algebra, and let $E$ be a Banach
$\mathfrak{A}$-bimodule. A \textit{derivation} is a bounded linear
map $D: \mathfrak{A} \longrightarrow E$
 satisfying $D(ab)=Da\cdot b+a\cdot Db\;\; (a,b\in \mathcal{A})$.  A Banach algebra $\mathfrak{A}$ is
 \textit{amenable} if for any Banach
$\mathfrak{A}$-bimodule $E$, any derivation $D: \mathfrak{A}
\longrightarrow E^*$ is \textit{inner}, that is, there exists $ \eta
\in E^*$ with $ D(a) = ad_{\eta}(a) = a \cdot \eta - \eta \cdot a$,
$a \in \mathfrak{A}$. This powerful notion introduced by B. E.
Johnson \cite {B. E}. The pointwise variant of amenability
introduced by H. G. Dales,  F. Ghahramani and R. J. Loy, and
appeared formally in \cite{DL}.  A Banach algebra  $\mathfrak{A}$ is
\textit{pointwise  amenable at} $a \in \mathfrak{A}$ if for any
Banach $\mathfrak{A}$-bimodule $E$, any derivation $D: \mathfrak{A}
\longrightarrow E^*$ is \textit{pointwise inner at} $a $, that is,
there exists $ \eta \in E^*$ with $ D(a ) = ad_{\eta}(a ) $.
Further, $\mathfrak{A}$ is \textit{pointwise  amenable} if it is
pointwise amenable at each $a \in \mathfrak{A}$.

For a Banach algebra $\mathfrak{A}$, recall that the projective
tensor product $\mathfrak{A} \widehat{\otimes} \mathfrak{A}$ is a
Banach $\mathfrak{A}$-bimodule in a natural way and the bounded
linear map $ \pi : \mathfrak{A} \widehat{\otimes} \mathfrak{A}
\longrightarrow \mathfrak{A}$ defined by $\pi( a \otimes b)  = ab $,
$(a, b \in \mathfrak{A})$ is a Banach $\mathfrak{A}$-bimodule
homomorphism.

Obviously, every amenable Banach algebra is pointwise  amenable. For
the converse, however, it has been open so far if there is a
pointwise amenable Banach algebra which is not already amenable
\cite[Problem 5]{DL}.

In this note, we give an illuminating example to show that the
problem has an affirmative answer.

\section{The results}

Let $\mathcal{V}$ be a Banach space, and let $f \in \mathcal{V}^*$
(the dual space of $\mathcal{V}$) be a non-zero element such that
$|| f || \leq 1$. Then $\mathcal{V}$ equipped with the product
defined by $ a b := f(a) b$ for $a , b \in\mathcal{V}$, is a Banach
algebra denoted by $\mathcal{V}_{f} $. In general, $\mathcal{V}_{f}
$ is a non-commutative, non-unital Banach algebra without right
approximate identity. One may see \cite{des, kv} for more details
and properties on this type of algebras.

A small variation of standard argument in \cite{J} gives the
following characterization of  pointwise amenability.
\begin{prop}\label{1.1} A Banach algebra $\mathfrak{A}$ is pointwise
amenable if and only if for each $a \in \mathfrak{A}$ there exists a
net $ (m_\alpha)_{\alpha} \subseteq \mathfrak{A} \widehat{\otimes}
\mathfrak{A}$ (depending on $a$) such that $ a \cdot m_\alpha -
m_\alpha \cdot a \longrightarrow 0$ and $ a \pi(m_\alpha)
\longrightarrow a$.
\end{prop}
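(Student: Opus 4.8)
The statement is the ``single element'' analogue of B.\,E.\ Johnson's classical equivalence between amenability and the existence of a (bounded) approximate diagonal, and the plan is to run the argument of \cite{J} with the quantifier ``for every $a$'' replaced throughout by ``for the fixed $a$''. It is convenient to adjoin an identity: set $\mathfrak{A}^{\#}=\mathfrak{A}\oplus\mathbb{C}1$, so that $\pi$ extends to a surjection $\pi^{\#}\colon\mathfrak{A}^{\#}\widehat{\otimes}\mathfrak{A}^{\#}\to\mathfrak{A}^{\#}$ with $\pi^{\#}(1\otimes1)=1$; every Banach $\mathfrak{A}$-bimodule $E$ is then a unital $\mathfrak{A}^{\#}$-bimodule, and a derivation $D\colon\mathfrak{A}\to E^{*}$ extends to a derivation $\widetilde{D}\colon\mathfrak{A}^{\#}\to E^{*}$ with $\widetilde{D}(1)=0$.

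For the implication ``$\Leftarrow$'': given the net $(m_{\alpha})$ and a derivation $D\colon\mathfrak{A}\to E^{*}$, pass (as in \cite{J}) to an essential submodule of $E$ and form the bounded operator $\Phi\colon\mathfrak{A}\widehat{\otimes}\mathfrak{A}\to E^{*}$, $\Phi(b\otimes c)=b\cdot D(c)$. A short computation with $D(bc)=D(b)\cdot c+b\cdot D(c)$ yields
\[
 a\cdot\Phi(m)-\Phi(m)\cdot a=\Phi(a\cdot m-m\cdot a)+\pi(m)\cdot D(a)\qquad(m\in\mathfrak{A}\widehat{\otimes}\mathfrak{A}).
\]
Applying this along the net and using $a\cdot m_{\alpha}-m_{\alpha}\cdot a\to0$ (which also forces $\pi(m_{\alpha})\,a\to a$, on applying $\pi$ and using $a\pi(m_{\alpha})\to a$), a weak$^{*}$ accumulation point $\eta$ of $\bigl(\Phi(m_{\alpha})\bigr)$ satisfies $D(a)=a\cdot\eta-\eta\cdot a$, so $\mathfrak{A}$ is pointwise amenable at $a$.

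For the implication ``$\Rightarrow$'', which carries the real content, I would use the universal derivation. Put $\mathcal{K}=\ker\pi^{\#}$ and $d_{0}\colon\mathfrak{A}\to\mathcal{K}$, $d_{0}(a)=a\otimes1-1\otimes a$; one checks $d_{0}$ is a derivation and so, after composing with the canonical embedding, a derivation into the dual module $\mathcal{K}^{**}$. Pointwise amenability of $\mathfrak{A}$ at $a$ provides $\Xi\in\mathcal{K}^{**}$ with $d_{0}(a)=a\cdot\Xi-\Xi\cdot a$; then $\mathbf{M}:=1\otimes1-\Xi\in(\mathfrak{A}^{\#}\widehat{\otimes}\mathfrak{A}^{\#})^{**}$ is a ``virtual diagonal at $a$'', i.e.\ $a\cdot\mathbf{M}=\mathbf{M}\cdot a$ and $(\pi^{\#})^{**}(\mathbf{M})=1$ (here one uses that taking second duals preserves the exact sequence $0\to\mathcal{K}\to\mathfrak{A}^{\#}\widehat{\otimes}\mathfrak{A}^{\#}\to\mathfrak{A}^{\#}\to0$, so that $\mathcal{K}^{**}=\ker(\pi^{\#})^{**}$). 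By Goldstine's theorem there is a bounded net in $\mathfrak{A}^{\#}\widehat{\otimes}\mathfrak{A}^{\#}$ converging weak$^{*}$ to $\mathbf{M}$, and passing to convex combinations (Mazur) converts the weak convergence of $a\cdot(\,\cdot\,)-(\,\cdot\,)\cdot a$ and of $\pi^{\#}(\,\cdot\,)$ into norm convergence, producing a bounded net $(N_{\gamma})$ with $a\cdot N_{\gamma}-N_{\gamma}\cdot a\to0$ and $\pi^{\#}(N_{\gamma})\to1$ in norm. Writing $N_{\gamma}=m_{\gamma}+p_{\gamma}\otimes1+1\otimes q_{\gamma}+\lambda_{\gamma}(1\otimes1)$ along $\mathfrak{A}^{\#}\widehat{\otimes}\mathfrak{A}^{\#}=(\mathfrak{A}\widehat{\otimes}\mathfrak{A})\oplus(\mathfrak{A}\otimes1)\oplus(1\otimes\mathfrak{A})\oplus\mathbb{C}(1\otimes1)$ and reading these limits coordinatewise gives $\lambda_{\gamma}\to1$, $a p_{\gamma}\to-a$, $q_{\gamma}a\to-a$ and $\pi(m_{\gamma})+p_{\gamma}+q_{\gamma}\to0$, whence the net $m_{\gamma}':=m_{\gamma}-p_{\gamma}\otimes q_{\gamma}\in\mathfrak{A}\widehat{\otimes}\mathfrak{A}$ satisfies $a\cdot m_{\gamma}'-m_{\gamma}'\cdot a\to0$ and $a\,\pi(m_{\gamma}')\to a$, which is the net demanded by the proposition.

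I expect this last ``descent'' into $\mathfrak{A}\widehat{\otimes}\mathfrak{A}$ to be the one genuine obstacle: the detour through $\mathfrak{A}^{\#}$ is forced because $\mathfrak{A}$ has no identity, and one must check that the correction $p_{\gamma}\otimes q_{\gamma}$ absorbs exactly the stray terms $p_{\gamma}\otimes a$ and $a\otimes q_{\gamma}$ coming from the ``cross'' part of $a\cdot N_{\gamma}-N_{\gamma}\cdot a$, using only boundedness of $(p_{\gamma})$, $(q_{\gamma})$ and the single one-sided relation $a\pi(m_{\gamma})\to a$. The remaining ingredients — the identity for $\Phi$, the reduction to essential modules, the replacement of $\mathcal{K}$ by $\mathcal{K}^{**}$, and the Goldstine/Mazur passage — are transcriptions of \cite{J}, the only recurring subtlety being the (routine) verification that each ``for every module'' reduction there localizes to the fixed $a$.
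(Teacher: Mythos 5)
The paper offers no proof of this proposition at all --- it is stated with only the remark that a ``small variation of standard argument'' in Johnson's paper gives it --- so there is nothing to compare your write-up against step by step. Your ``$\Rightarrow$'' direction (the universal derivation $d_{0}$ into $\mathcal{K}^{**}$, Goldstine plus Mazur, then the coordinatewise descent from $\mathfrak{A}^{\#}\widehat{\otimes}\mathfrak{A}^{\#}$ to $\mathfrak{A}\widehat{\otimes}\mathfrak{A}$) is a faithful and, as far as I can check, correct localization of that argument: the correction term $p_{\gamma}\otimes q_{\gamma}$ really does absorb the cross terms $a\otimes q_{\gamma}$ and $-p_{\gamma}\otimes a$, using only the boundedness of $(p_{\gamma})$ and $(q_{\gamma})$ inherited from the Goldstine net.

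The genuine gap is in the ``$\Leftarrow$'' direction, which is the half the paper actually uses in Theorem 2.2. Three of Johnson's ingredients are unavailable here and you pass over all three. First, the proposition does not assume $(m_{\alpha})$ bounded, so $(\Phi(m_{\alpha}))$ need not have a weak$^{*}$ accumulation point; Johnson's approximate diagonals are bounded by definition, and that is exactly where boundedness is used. Second, the reduction to essential (neo-unital) modules rests on Cohen factorization with a bounded approximate identity for $\mathfrak{A}$; the hypothesis supplies only, for each single $a$, a net $\pi(m_{\alpha})$ with $a\,\pi(m_{\alpha})\to a$ --- a one-point, one-sided, possibly unbounded approximate identity --- and the algebra $\mathcal{V}_{f}$ to which the proposition is applied has no bounded approximate identity at all (that is the point of the paper). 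Third, and most concretely, your identity leaves you needing $\pi(m_{\alpha})\cdot D(a)\to D(a)$ at least weak$^{*}$, i.e.\ $x\cdot\pi(m_{\alpha})\to x$ for the relevant $x\in E$; the hypothesis controls the product of $\pi(m_{\alpha})$ with the single element $a$ of $\mathfrak{A}$ and says nothing about its module action on $E$. A symptom: for $\mathcal{V}_{f}$ itself with the constant net $m=b\otimes a$, $f(b)=1/f(a)$, the choice $\eta=\Phi(m)$ gives $a\cdot\eta-\eta\cdot a=f(a)^{-1}\,a\cdot D(a)$, which is not $D(a)$ in general; one must instead take $\eta=\Phi(m)-\Psi(m)$ with $\Psi(b\otimes c)=D(b)\cdot c$ and use $D(a^{2})=a\cdot D(a)+D(a)\cdot a$. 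As written, the ``$\Leftarrow$'' half is not a proof; repairing it seems to require either adding boundedness to the statement or an argument genuinely different from the one you sketch.
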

\begin{theo}\label{1.2} Let $\mathcal{V}$ be a Banach space, and let $f \in
\mathcal{V}^*$ be a non-zero element such that $|| f || \leq 1$.
Then $\mathcal{V}_{f} $ is pointwise amenable, but not amenable.
\end{theo}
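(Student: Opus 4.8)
The two clauses of the statement call for quite different arguments, and I would establish them separately. Non-amenability is the soft part: an amenable Banach algebra has a bounded approximate identity, hence a right approximate identity, whereas $\mathcal V_f$ has no right approximate identity at all. Indeed, a right approximate identity $(e_\alpha)$ would satisfy $f(a)e_\alpha=ae_\alpha\to a$, forcing $e_\alpha\to f(a)^{-1}a$ in norm for \emph{every} $a$ with $f(a)\neq 0$; comparing two such vectors $a\neq b$ with $f(a)=f(b)=1$ would give $a=b$, which is absurd once $\dim\mathcal V\geq 2$ --- the case that must anyway be assumed, since for $\dim\mathcal V=1$ one has $\mathcal V_f\cong\mathbb C$, which is amenable. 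Hence $\mathcal V_f$ is not amenable.

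For pointwise amenability I would appeal to Proposition~\ref{1.1}: it is enough to produce, for each fixed $a\in\mathcal V_f$, a net $(m_\alpha)$ in $\mathcal V_f\widehat\otimes\mathcal V_f$ with $a\cdot m_\alpha-m_\alpha\cdot a\to 0$ and $a\pi(m_\alpha)\to a$, and I expect a \emph{constant} net to work in every instance. Since $f\neq 0$, fix $e\in\mathcal V$ with $f(e)=1$. If $f(a)\neq 0$, take $m:=f(a)^{-2}\,a\otimes a$; then, using $bc=f(b)c$, one computes $a\cdot m=m\cdot a=f(a)^{-1}(a\otimes a)$, so $a\cdot m-m\cdot a=0$, and $\pi(m)=f(a)^{-1}a$, so $a\pi(m)=a$ --- both conditions exactly. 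If $a\in\ker f$, take $m:=e\otimes a$; then $a\cdot m=f(a)\,e\otimes a=0$ and $m\cdot a=e\otimes(f(a)a)=0$, so $a\cdot m-m\cdot a=0$, while $\pi(m)=ea=a$. Feeding these choices into Proposition~\ref{1.1} would give the pointwise amenability of $\mathcal V_f$.

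The step I expect to be the genuine obstacle is precisely the elements of $\ker f$. There the term $a\pi(m_\alpha)=f(a)\pi(m_\alpha)$ vanishes identically in $m_\alpha$, so it cannot be made to approach $a$ by any choice of $m_\alpha$; the net $e\otimes a$ supplies instead only the two exact identities $\pi(e\otimes a)=a$ and $a\cdot(e\otimes a)=(e\otimes a)\cdot a$, and everything hinges on confirming that this is exactly the data Proposition~\ref{1.1} calls for at such an $a$. Getting this reconciliation right --- matching the degenerate behaviour on $\ker f$ against the characterization --- is where the real work lies; by comparison, the case $f(a)\neq 0$ and the non-amenability argument are routine.
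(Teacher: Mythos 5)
Your treatment of the two ``easy'' parts coincides with the paper's. For non-amenability the paper simply invokes the absence of a bounded approximate identity, which is the argument you spell out in more detail (your observation that one must assume $\dim\mathcal V\ge 2$, since $\mathcal V_f\cong\mathbb C$ in dimension one, is a caveat the paper does not record). For $f(a)\neq 0$ the paper takes $m=\sum_{n}b_n\otimes a$ with $\sum_n f(b_n)=1/f(a)$, of which your single tensor $f(a)^{-2}a\otimes a$ is exactly the one-term instance; both verify $a\cdot m=m\cdot a$ and $a\pi(m)=a$ and then cite Proposition~\ref{1.1}.

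The step you flag as ``where the real work lies'' is a genuine gap, and it is one you cannot close. As you yourself compute, for $0\neq a\in\ker f$ one has $a\pi(m)=f(a)\pi(m)=0$ for \emph{every} $m\in\mathcal V_f\widehat{\otimes}\mathcal V_f$, so the condition $a\pi(m_\alpha)\to a$ demanded by Proposition~\ref{1.1} is unsatisfiable; your element $e\otimes a$ delivers $\pi(e\otimes a)=a$, which is not the condition asked for, and no ``reconciliation'' with the proposition is possible. Be aware that the paper's own proof has precisely the same hole: its $m$ exists only when $f(a)\neq 0$, and the case $a\in\ker f$ is passed over in silence. Moreover the obstruction is not an artifact of the proposition. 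Let $E=\mathbb C$ with module actions $a\cdot z=0$ and $z\cdot a=f(a)z$; on $E^*\cong\mathbb C$ the induced actions are $a\cdot\eta=f(a)\eta$ and $\eta\cdot a=0$, so every bounded linear functional $D\in\mathcal V^*$ satisfies $D(ab)=f(a)Db=Da\cdot b+a\cdot Db$ and is a derivation into $E^*$, while $a\cdot\eta-\eta\cdot a=f(a)\eta$. Hence $D$ is pointwise inner at $a$ only if $D(a)\in f(a)\mathbb C$; choosing $a\in\ker f\setminus\{0\}$ and $D$ with $D(a)=1$ shows $\mathcal V_f$ is \emph{not} pointwise amenable at $a$. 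So the difficulty you isolated is a real counterexample in disguise, and neither your argument nor the paper's establishes the stated theorem for $\dim\mathcal V\ge 2$.
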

\begin{proof} For $a \in \mathcal{V}_{f}$, we consider $ m:= \sum_{n=1}^\infty b_n \otimes c_n \in \mathcal{V}_{f} \widehat{\otimes}
\mathcal{V}_{f}$ with $\sum_{n=1}^\infty f(b_n) = \frac{1}{f(a)} $
and $c_n := a$ for all $n$. Then $$ a \cdot m = \sum_{n=1}^\infty a
b_n \otimes c_n = f(a) \sum_{n=1}^\infty  b_n \otimes c_n = f(a) m
$$ and similarly $m \cdot a = f(a) m$, so that $  a \cdot m = m \cdot
a$. Next, $\pi(m) = \sum_{n=1}^\infty b_n c_n= (\sum_{n=1}^\infty
f(b_n)) a = \frac{1}{f(a)} a$. Hence $$ a \pi(m) = \frac{1}{f(a)}
a^2 = \frac{1}{f(a)} f(a) a = a \ .$$ Therefore, $\mathcal{V}_{f} $
is pointwise amenable by Proposition \ref{1.1}. Because of the lack
of a bounded approximate identity, $\mathcal{V}_{f} $ is not
amenable.
\end{proof}
\begin{rem}\label{1.3} Similar to \cite{DL}, we may define pointwise contractible Banach algebras
(see for instance \cite{D} for the definition of contractible Banach
algebras). Then Theorem \ref{1.2} says that $\mathcal{V}_{f} $ is
pointwise contractible. Notice that $\mathcal{V}_{f} $ is not
contractible, because it has no identity.
\end{rem}

Let $\mathfrak{A}$ be a Banach algebra. A Banach
$\mathfrak{A}$-bimodule $E$ is \textit{dual} if there is a closed
submodule $E_*$ of $E^*$ such that $E=(E_*)^*$. A Banach algebra
$\mathfrak{A}$ is \textit{dual} if it is a dual Banach space such
that multiplication is separately continuous in the $w^*$-topology.
 For a dual Banach algebra $\mathfrak{A}$, a dual Banach $\mathfrak{A}$-bimodule $E$ is \textit{normal} if the module actions of
  $\mathfrak{A}$ on $E$ are
 $w^*$-continuous. The notion of Connes amenability for dual Banach algebras, which is
another modification of the notion of amenability
  systematically introduced   by  V. Runde  \cite{R1}.
 A dual Banach algebra $\mathfrak{A}$ is \textit{Connes
amenable} if every $w^*$-continuous derivation from $\mathfrak{A}$
into a normal, dual Banach
 $\mathfrak{A}$-bimodule is inner. The pointwise variant of Connes amenability introduced in \cite{sm}.
  A dual Banach algebra $\mathfrak{A}$ is \textit{pointwise Connes amenable at } $a \in
\mathfrak{A}$ if for every normal, dual Banach
$\mathfrak{A}$-bimodule $E$, every $w^*$-continuous derivation $D:
\mathfrak{A}\to E$ is pointwise inner at $a$. Moreover,
$\mathfrak{A}$ is \textit{pointwise Connes amenable} if it is
pointwise Connes amenable at each $a \in \mathfrak{A}$.

\begin{theo}\label{1.8} Let $\mathcal{V}$ be a dual Banach space, and let $f \in
\mathcal{V}^*$ be a $w^*$-continuous non-zero element such that $||
f || \leq 1$. Then $\mathcal{V}_{f} $ is a pointwise Connes
amenable, non-Connes amenable dual Banach algebra.
\end{theo}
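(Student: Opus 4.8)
The plan is to adapt the proof of Theorem~\ref{1.2} to the dual, normal setting, in three steps: show that $\mathcal{V}_{f}$ is a dual Banach algebra, establish pointwise Connes amenability by exhibiting for each $a$ a tensor-level ``pointwise diagonal'', and rule out Connes amenability through the absence of a bounded approximate identity. For the first step, let $\mathcal{V}_{*}$ be a predual of $\mathcal{V}$, so $\mathcal{V}=(\mathcal{V}_{*})^{*}$; the hypothesis that $f$ is $w^{*}$-continuous means $f\in\mathcal{V}_{*}$. Then for fixed $b$ the map $a\mapsto ab=f(a)b$ is $w^{*}$-continuous, and for fixed $a$ the map $b\mapsto ab=f(a)b$ is $w^{*}$-continuous, being scalar multiplication by $f(a)$. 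Hence multiplication on $\mathcal{V}_{f}$ is separately $w^{*}$-continuous, so $(\mathcal{V}_{f},\mathcal{V}_{*})$ is a dual Banach algebra and the notions of normal dual bimodule and $w^{*}$-continuous derivation apply.

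For the second step, fix $a\in\mathcal{V}_{f}$ with $f(a)\neq 0$ and pick $b_{0}\in\mathcal{V}_{f}$ with $f(b_{0})=\frac{1}{f(a)}$. As in Theorem~\ref{1.2} (with the series there replaced by the single tensor), the element $m:=b_{0}\otimes a\in\mathcal{V}_{f}\widehat{\otimes}\mathcal{V}_{f}$ satisfies $a\cdot m=m\cdot a$ and $a\,\pi(m)=a$. Given a normal dual Banach $\mathcal{V}_{f}$-bimodule $E$ and a $w^{*}$-continuous derivation $D\colon\mathcal{V}_{f}\to E$, I would put $\eta:=b_{0}\cdot D(a)-D(b_{0})\cdot a\in E$ and check that $D(a)=a\cdot\eta-\eta\cdot a$; this uses only the Leibniz rule together with $ab_{0}=f(a)b_{0}$, $a^{2}=f(a)a$ and $f(b_{0})=\frac{1}{f(a)}$, and reduces to the identity $a\cdot D(a)+D(a)\cdot a=f(a)D(a)$ obtained by applying $D$ to $a^{2}=f(a)a$. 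Hence every such $D$ is pointwise inner at $a$, so $\mathcal{V}_{f}$ is pointwise Connes amenable. Alternatively one may transport $m$ to the appropriate normal projective tensor product and invoke the dual analogue of Proposition~\ref{1.1} from \cite{sm}.

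For the third step, because of the lack of a bounded approximate identity, $\mathcal{V}_{f}$ is not Connes amenable: a Connes amenable dual Banach algebra has a bounded approximate identity and hence, being a dual Banach algebra, an identity --- a $w^{*}$-cluster point of the net is a unit, by separate $w^{*}$-continuity of the product --- whereas $\mathcal{V}_{f}$ has no right approximate identity and so is not unital \cite{R1}. The delicate points will be the compatibility of the tensor-level witness $m$, which a priori lives in the non-normal module $\mathcal{V}_{f}\widehat{\otimes}\mathcal{V}_{f}$, with the normal $w^{*}$-continuous framework, and the degenerate case $f(a)=0$, which the choice of $b_{0}$ does not reach. The explicit $\eta$ above removes the first --- it lies in $E$, and its verification uses neither normality of $E$ nor $w^{*}$-continuity of $D$, so $\mathcal{V}_{f}$ is in fact already pointwise amenable at every $a$ with $f(a)\neq 0$ --- so I expect the genuine obstacle to be the case $f(a)=0$.
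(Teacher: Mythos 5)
Your first and third steps coincide with the paper's proof: separate $w^{*}$-continuity of the product follows from $f\in\mathcal{V}_{*}$, and the absence of an identity rules out Connes amenability via \cite[Proposition 4.1]{R1}. For the middle step the paper simply invokes Theorem \ref{1.2} (pointwise amenable implies pointwise Connes amenable), whereas you give a direct formula; your $\eta=b_{0}\cdot D(a)-D(b_{0})\cdot a$ does satisfy $a\cdot\eta-\eta\cdot a=D(a)$ whenever $f(a)\neq 0$ (the verification uses $f(a)D(b_{0})-a\cdot D(b_{0})=D(a)\cdot b_{0}$ and $b_{0}\cdot D(a)+D(b_{0})\cdot a=\tfrac{1}{f(a)}D(a)$, both consequences of the Leibniz rule), and it has the merit of exhibiting the implementing element explicitly and of not needing Proposition \ref{1.1}.

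However, the case $f(a)=0$ that you flag at the end is a genuine gap, and it is not one you can close. If $\dim\mathcal{V}\geq 2$, pick $a\in\ker f\setminus\{0\}$. Let $E_{*}=\mathbb{C}$ with actions $b\cdot z=0$ and $z\cdot b=f(b)z$; its dual $E=\mathbb{C}$ then carries the actions $b\cdot\lambda=f(b)\lambda$ and $\lambda\cdot b=0$, and is a normal dual $\mathcal{V}_{f}$-bimodule because $f$ is $w^{*}$-continuous. Every $D\in\mathcal{V}_{*}$ is a $w^{*}$-continuous derivation into $E$, since $D(bc)=f(b)D(c)=b\cdot D(c)+D(b)\cdot c$, while every inner derivation satisfies $b\cdot\eta-\eta\cdot b=f(b)\eta$, which vanishes at $b=a$. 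Choosing $D\in\mathcal{V}_{*}$ with $D(a)\neq 0$ therefore produces a $w^{*}$-continuous derivation that is not pointwise inner at $a$, so $\mathcal{V}_{f}$ is not pointwise Connes amenable at $a$. Note that the paper's own proof has exactly the same blind spot: the element $m$ in Theorem \ref{1.2} requires $\sum_{n}f(b_{n})=1/f(a)$, and the criterion of Proposition \ref{1.1} demands $a\pi(m_{\alpha})=f(a)\pi(m_{\alpha})\to a$, which is impossible for $a\in\ker f\setminus\{0\}$. So you have correctly located the weak point; neither your argument nor the paper's resolves it, and the module above indicates that the statement as given actually fails once $\dim\mathcal{V}\geq 2$, i.e.\ once $\ker f\neq\{0\}$.
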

\begin{proof}Since $f$ is $w^*$-continuous, the multiplication on $\mathcal{V}_{f}
$ is separately  $w^*$-continuous, and so $\mathcal{V}_{f} $ is a
dual Banach algebra. As $\mathcal{V}_{f} $ is pointwise amenable by
Theorem \ref{1.2}, it is automatically pointwise Connes amenable.
Since $\mathcal{V}_{f}$ does not have an identity, it is not Connes
amenable by \cite[Proposition 4.1]{R1}.
\end{proof}

The following is \cite[Theorem 2.6]{sm}.
\begin{theo}\label{1.11}
Every commutative, pointwise Connes amenable dual Banach algebra has
an identity.
\end{theo}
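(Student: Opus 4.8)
The plan is to adapt the classical ``approximate identity via a derivation'' argument to the dual setting and then to globalise using commutativity together with $w^*$-compactness.

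First I would fix $a\in\mathfrak{A}$ and feed a carefully chosen module and derivation into the hypothesis. Take $E=\mathfrak{A}$ (a dual space, with predual $\mathfrak{A}_*$) equipped with the left module action declared to be zero and the right module action equal to the product of $\mathfrak{A}$; separate $w^*$-continuity of multiplication makes $E$ a normal dual Banach $\mathfrak{A}$-bimodule, and the identity map $\mathrm{id}\colon\mathfrak{A}\to E$ is a $w^*$-continuous derivation (the Leibniz rule here reads $ab=0+ab$). Pointwise Connes amenability at $a$ then yields $\eta\in E=\mathfrak{A}$ with $a=\mathrm{id}(a)=a\cdot\eta-\eta\cdot a=-\eta a$; setting $e_a:=-\eta$ this says $e_aa=a$, and commutativity gives $ae_a=a$ as well. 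So the first milestone is that every element of $\mathfrak{A}$ has a two-sided local identity.

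Next I would promote this to an approximate identity with an ``eventual equality'' property. If $e,e'$ are local identities for $a$ and $a'$ respectively, then $e+e'-ee'$ is, using commutativity, a local identity for both; iterating, every finite set $F\subseteq\mathfrak{A}$ admits a common local identity $e_F$, so the net $(e_F)_F$ over finite subsets directed by inclusion satisfies $e_Fa=a$ eventually, for each $a$. Equivalently, the sets $S_a:=\{e\in\mathfrak{A}:ea=a\}$ are non-empty, convex, $w^*$-closed (being $\varphi_a^{-1}(\{a\})$ for the $w^*$-continuous map $\varphi_a\colon x\mapsto xa$), and have the finite intersection property; what we must produce is a point of $\bigcap_{a\in\mathfrak{A}}S_a$, which is precisely a two-sided identity for $\mathfrak{A}$.

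The last step is where I expect the real difficulty, and where the dual hypothesis must be exploited a second time. One favourable case is painless: if some $a_0$ has trivial annihilator $\{x\in\mathfrak{A}:xa_0=0\}=\{0\}$, then $S_{a_0}$ is the single point $e_{a_0}$, and comparing it with $e_{a_0}+e_b-e_{a_0}e_b\in S_{a_0}$ forces $e_{a_0}e_b=e_b$, hence $e_{a_0}b=e_{a_0}(e_bb)=(e_{a_0}e_b)b=b$ for every $b$, so $e_{a_0}$ is an identity. The obstacle is the general case: a priori the net $(e_F)$ is unbounded, so one cannot simply pass to a $w^*$-cluster point. The plan is to remove this, either by arranging the common local identities $e_F$ inside one fixed bounded set, or by cutting down to a suitable $w^*$-closed subalgebra on which some element does have trivial annihilator; granted such a bounded piece $C$ of $\bigcap_aS_a$, its $w^*$-compactness together with the finite intersection property of $\{S_a\cap C\}_a$ then delivers $e\in\bigcap_aS_a$. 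Securing that boundedness — pointwise Connes amenability offers no uniform control by itself — is the crux of the argument.
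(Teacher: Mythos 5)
The paper does not actually prove this statement: Theorem \ref{1.11} is quoted verbatim from \cite[Theorem 2.6]{sm}, so there is no in-paper argument to compare yours with. Judged on its own terms, your first two steps are sound and are surely how any proof must begin. The module $E=\mathfrak{A}$ with zero left action and multiplication as right action is indeed a normal dual bimodule (the zero action is trivially $w^*$-continuous, the right action is separately $w^*$-continuous because $\mathfrak{A}$ is dual, and $\mathfrak{A}_*$ is a closed submodule of $\mathfrak{A}^*$ under the induced actions), the identity map is a $w^*$-continuous derivation, and pointwise inner\-ness at $a$ gives $e_a a=a$ with $e_a=-\eta$; commutativity makes this two-sided, and the $e+e'-ee'$ trick gives common local identities for finite sets, so the sets $S_a=\{e:ea=a\}$ are non-empty, convex, $w^*$-closed and have the finite intersection property. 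Your computation in the trivial-annihilator case is also correct.

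The problem is that the argument stops exactly where the theorem ceases to be easy. You need a point of $\bigcap_a S_a$, and the only general tool you invoke for this is $w^*$-compactness, which requires a bounded set meeting every $S_a$ --- and you state yourself that ``pointwise Connes amenability offers no uniform control by itself'' and that securing boundedness ``is the crux of the argument,'' without then securing it. As written, this is a proof sketch with an acknowledged hole at the decisive step: the $S_a$ are unbounded affine subspaces (cosets of annihilators) whenever annihilators are non-trivial, the common local identities $e_{F}$ built by iterating $e+e'-ee'$ have norms that can grow without bound with $\abs{F}$, and neither of your two proposed repairs (forcing the $e_F$ into a fixed bounded set, or passing to a $w^*$-closed subalgebra with a trivial-annihilator element) is carried out or even shown to be feasible. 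Some genuinely new input is needed here --- for instance a uniform bound on the local identities obtained from a Baire category argument applied to the norm-closed sets $\{a:\exists e,\ \Vert e\Vert\le n,\ ea=a\}$, followed by a compactness or common-fixed-point argument; but even that requires care, and nothing of the sort appears in the proposal. Until the globalisation step is actually proved, the argument does not establish the theorem.
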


\begin{rem}\label{1.9}
The fact that $\mathcal{V}_{f} $ is non-commutative pointwise Connes
amenable but non-unital, shows that the commutativity in Theorem
\ref{1.11} can not be dropped.
\end{rem}

\begin{prop}\label{1.6} Let $\mathcal{V}$ be a Banach space, and let $f \in
\mathcal{V}^*$ be a non-zero element such that $|| f || \leq 1$.
Then $\mathcal{V}_{f} $ is not approximately amenable.
\end{prop}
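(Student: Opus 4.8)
The plan is to rule out approximate amenability by the same defect that rules out amenability in Theorem \ref{1.2}: the failure of $\mathcal{V}_{f}$ to carry a two-sided approximate identity. The one external fact I would invoke is the structure result of Ghahramani and Loy \cite{GL}, that every approximately amenable Banach algebra has both a left approximate identity and a right approximate identity. Granting this, it suffices to show that $\mathcal{V}_{f}$ has \emph{no} right approximate identity, and then Proposition \ref{1.6} follows by contraposition. (Note that $\mathcal{V}_{f}$ does in fact possess a left approximate identity --- indeed a left identity, since any $e \in \mathcal{V}$ with $f(e) = 1$ satisfies $ea = f(e)a = a$ --- so it is precisely the right side that obstructs.)

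To carry this out I would first dispose of the degenerate case $\dim \mathcal{V} = 1$: there, since $f \neq 0$, the algebra $\mathcal{V}_{f}$ is isomorphic to $\mathbb{C}$, hence unital and amenable, so this case must be excluded from the statement and we assume $\dim \mathcal{V} \ge 2$, equivalently $\ker f \neq \{0\}$. Now suppose, for a contradiction, that $(e_{\alpha})_{\alpha} \subseteq \mathcal{V}_{f}$ were a right approximate identity, so that $a e_{\alpha} = f(a)\, e_{\alpha} \longrightarrow a$ for every $a \in \mathcal{V}$. Fixing $a_{0}$ with $f(a_{0}) = 1$ gives $e_{\alpha} = f(a_{0})\, e_{\alpha} \longrightarrow a_{0}$; but for any non-zero $b \in \ker f$ we also have $f(a_{0} + b) = 1$, hence $e_{\alpha} = f(a_{0} + b)\, e_{\alpha} \longrightarrow a_{0} + b$. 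Uniqueness of limits forces $b = 0$, a contradiction. Hence $\mathcal{V}_{f}$ has no right approximate identity, and is therefore not approximately amenable.

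I do not expect a genuine obstacle: once the Ghahramani--Loy result is in hand the remainder is elementary, and the only point needing care is the formal one that the one-dimensional case has to be excluded (there $\mathcal{V}_{f} \cong \mathbb{C}$ is amenable). For consistency with Theorem \ref{1.2} one could equally well cite the stronger statement that approximately amenable Banach algebras admit \emph{bounded} approximate identities, but since $\mathcal{V}_{f}$ lacks even an unbounded right approximate identity, the weaker form is all that is needed.
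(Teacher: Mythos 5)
Your proposal is correct and follows essentially the same route as the paper, which likewise deduces the result from the absence of a right approximate identity in $\mathcal{V}_{f}$ together with the Ghahramani--Loy lemma (cited there as \cite[Lemma 2.2]{GL}). You add two worthwhile details the paper leaves implicit: an actual verification that $\mathcal{V}_{f}$ has no right approximate identity, and the observation that the statement tacitly requires $\dim\mathcal{V}\geq 2$, since for $\dim\mathcal{V}=1$ the algebra $\mathcal{V}_{f}$ is unital and isomorphic to $\mathbb{C}$.
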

\begin{proof} As $\mathcal{V}_{f} $ has no right approximate
identity, it is not approximately amenable by \cite [Lemma 2.2]{GL}.
\end{proof}
The following is a result of F. Ghahramani, see \cite [Theorem
1.5.4]{DL}.
\begin{theo}\label{1.10} Every pointwise amenable, commutative Banach
algebra is  approximately amenable.
\end{theo}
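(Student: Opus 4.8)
The plan is to compare the two notions in their ``finite-set'' forms and bridge the gap using commutativity. Unravelling the net definition, $\mathfrak A$ is approximately amenable exactly when, for every Banach $\mathfrak A$-bimodule $E$, every derivation $D\colon\mathfrak A\to E^*$, every finite $F\subseteq\mathfrak A$ and every $\varepsilon>0$, there is $\xi\in E^*$ with $\|D(a)-(a\cdot\xi-\xi\cdot a)\|<\varepsilon$ for all $a\in F$. Pointwise amenability asserts exactly the same thing, but only for one $a$ at a time (and with $\varepsilon=0$); so the whole content of the theorem is to upgrade a single element to a finite set, and commutativity must be what makes this possible. If one prefers to work on the diagonal side, Proposition~\ref{1.1} together with \cite{GL} reduces matters to manufacturing, for each finite $F\subseteq\mathfrak A$ and each $\varepsilon>0$, a single $M\in\mathfrak A^\#\widehat{\otimes}\mathfrak A^\#$ with $\pi^\#(M)=e$ and $\|a\cdot M-M\cdot a\|<\varepsilon$ $(a\in F)$ out of the one-element diagonals $(m^a_\alpha)$ of Proposition~\ref{1.1}.

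First I would record the consequences of commutativity. For any derivation $\delta$ into an $\mathfrak A$-bimodule, commutativity of $\mathfrak A$ forces the symmetry identity $a\cdot\delta(b)-\delta(b)\cdot a=b\cdot\delta(a)-\delta(a)\cdot b$ for all $a,b$; in particular, once a derivation has been corrected so as to vanish at $a$, each of its values commutes with $a$. On the diagonal side, commutativity makes $\pi$ and $\pi^\#$ algebra homomorphisms, so that $\mathfrak A^\#\widehat{\otimes}\mathfrak A^\#$ is a commutative Banach algebra, $\ker\pi^\#$ is the closed ideal generated by $\{\,a\otimes e-e\otimes a:a\in\mathfrak A\,\}$, the module commutator collapses to $a\cdot M-M\cdot a=(a\otimes e-e\otimes a)M$, and local pieces may be combined multiplicatively — by ordinary products in $\mathfrak A^\#\widehat{\otimes}\mathfrak A^\#$ or by the circle product $M_1*M_2=M_1+M_2-M_1M_2$ — compatibly with $\pi^\#$ and with the commutator. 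These identities are the tools for controlling the cross-terms produced when several one-element corrections are superposed.

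The core of the argument is then an inductive amalgamation over $F=\{a_1,\dots,a_n\}$. In the derivation picture: use pointwise amenability at $a_1$ to choose $\eta_1$ with $D(a_1)=a_1\cdot\eta_1-\eta_1\cdot a_1$ and pass to $D_1:=D-\operatorname{ad}_{\eta_1}$; then use pointwise amenability at $a_2$ for $D_1$ to choose $\eta_2$ and pass to $D_2:=D_1-\operatorname{ad}_{\eta_2}$; and so on. The delicate point is the damage inflicted on the already-handled elements, since $D_{k+1}(a_j)=D_k(a_j)-(a_j\cdot\eta_{k+1}-\eta_{k+1}\cdot a_j)$ for $j\le k$. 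Here one uses, first, that pointwise amenability actually supplies a \emph{net}, so the witness $\eta_{k+1}$ may be chosen so that its defect at $a_{k+1}$ is as small as desired — in particular small relative to all the data already fixed; and, second, the symmetry identity, which (because $D_k$ already vanishes, or nearly vanishes, at $a_1,\dots,a_k$) relates the damage at $a_j$, via its commutator with $a_{k+1}$, to the defect at $a_{k+1}$ and to quantities built only from $a_1,\dots,a_{k+1}$ and the earlier witnesses. Setting up the recursion so that each new defect is small in terms of everything chosen before it, one arrives after $n$ steps at $\xi:=\eta_1+\dots+\eta_n$ with $\|D(a)-(a\cdot\xi-\xi\cdot a)\|<\varepsilon$ for all $a\in F$; letting $(F,\varepsilon)$ run then yields approximate amenability (consistently with the fact that approximately amenable algebras possess approximate identities). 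The same scheme runs on the diagonal side, assembling $M$ as a product $M_1M_2\cdots M_n$ of one-element diagonals chosen from their nets with defects small enough to absorb the (unbounded) norms of the remaining factors.

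The step I expect to be the genuine obstacle is precisely this control of the cross-terms. The witnesses — respectively the one-element diagonals — furnished by pointwise amenability carry no norm bound whatsoever, so a careless correction for $a_{k+1}$ can, through its own possibly enormous norm, undo the approximations already secured at $a_1,\dots,a_k$; it is only the commutativity identities, which force every cross-term through the freely chosen (hence arbitrarily small) defect of the newest piece, together with a genuinely recursive bookkeeping of the errors, that make the amalgamation succeed. That this mechanism has no non-commutative analogue is exactly what leaves room for the algebra $\mathcal V_f$ of Theorem~\ref{1.2} to be pointwise amenable — indeed pointwise contractible, by Remark~\ref{1.3} — yet fail even to be approximately amenable, as Proposition~\ref{1.6} records.
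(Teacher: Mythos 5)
First, a point of comparison: the paper offers no proof of Theorem \ref{1.10} at all --- it is quoted as a result of Ghahramani via \cite[Theorem 1.5.4]{DL} --- so your attempt can only be judged on its own terms, and on those terms it is a strategy outline rather than a proof. You correctly reduce approximate amenability to its finite-set form and correctly observe the symmetry identity $a\cdot\delta(b)-\delta(b)\cdot a=b\cdot\delta(a)-\delta(a)\cdot b$ for derivations on a commutative algebra, but the step you yourself flag as ``the genuine obstacle'' is never carried out, and the tool you assign to it does not do the job. Concretely: at stage $k+1$ you pick $\eta_{k+1}$ with $D_k(a_{k+1})=\operatorname{ad}_{\eta_{k+1}}(a_{k+1})$, and the damage at an earlier point is $\operatorname{ad}_{\eta_{k+1}}(a_j)=a_j\cdot\eta_{k+1}-\eta_{k+1}\cdot a_j$ itself. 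The symmetry identity, applied to the inner derivation $\operatorname{ad}_{\eta_{k+1}}$, only relates the module commutator of that damage with $a_{k+1}$ to the (zero, or small) commutator of $D_k(a_{k+1})$-type terms with $a_j$; it says nothing about the norm of $\operatorname{ad}_{\eta_{k+1}}(a_j)$ itself, which is exactly the quantity that must be small and over which pointwise amenability gives no control whatsoever. So the recursion as described does not close.

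The diagonal-side variant has the same defect in a different guise. Writing $M=M_1M_2\cdots M_n$ in the commutative algebra $\mathfrak{A}^\#\widehat{\otimes}\mathfrak{A}^\#$, one gets $\|a_k\cdot M-M\cdot a_k\|\le\|a_k\cdot M_k-M_k\cdot a_k\|\prod_{j\ne k}\|M_j\|$, and the factors $\|M_j\|$ with $j>k$ are chosen \emph{after} $M_k$ and are unbounded, so the requirement that the defect of $M_k$ absorb them is circular; the circle product does not resolve this, and moreover Proposition \ref{1.1} only gives $a\,\pi(m_\alpha)\to a$, not $\pi(m_\alpha)\to e$, so the condition $\pi^\#(M)=e$ is not available from the pointwise data in the first place. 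A correct proof has to inject a genuinely new idea at this point (this is where Ghahramani's argument in \cite{DL} lives), not merely the bookkeeping you describe. I would also note that your framing of pointwise amenability as supplying a witness with ``defect as small as desired'' conflates the exact, single-functional definition used in this paper with the net formulation of Proposition \ref{1.1}; either way, the uncontrolled behaviour of the witness at the \emph{other} points of $F$ is the whole difficulty, and it remains open in your write-up.
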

\begin{rem}\label{1.7} By Theorem \ref{1.2} and Proposition
\ref{1.6}, $\mathcal{V}_{f} $ is a pointwise amenable,
non-approximately amenable Banach algebra which is not commutative.
So, without commutativity Theorem \ref{1.10} is not true.
\end{rem}

We conclude by looking at character amenability of $\mathcal{V}_{f}
$. For a Banach algebra $\mathfrak{A}$, we write
$\Delta(\mathfrak{A})$ for the set of all continuous homomorphisms
from $\mathfrak{A}$ onto $ \mathbb{C}$. From \cite{l}, we recall
that a Banach algebra $\mathfrak{A}$ is $\varphi$-\textit{amenable},
$\varphi \in \Delta(\mathfrak{A})$, if there exists a bounded linear
functional $m$ on $\mathfrak{A}^*$ satisfying $ m(\varphi) = 1$ and
$ m(g \cdot a) = \varphi(a) m(g)$, $(a \in \mathfrak{A}, g \in
\mathfrak{A}^*)$. Further, $\mathfrak{A}$ is \textit{character
amenable} if it is $\varphi$-amenable for all $\varphi \in
\Delta(\mathfrak{A})$ \cite{s}.

\begin{prop}\label{1.4} Let $\mathcal{V}$ be a Banach space, and let $f \in
\mathcal{V}^*$ be a non-zero element such that $|| f || \leq 1$.
Then $\mathcal{V}_{f} $ is character amenable.
\end{prop}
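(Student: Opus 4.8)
The plan is to first identify the character space $\Delta(\mathcal{V}_f)$. If $\varphi\colon\mathcal{V}_f\to\mathbb C$ is a non-zero homomorphism, then for all $a,b$ we have $\varphi(a)\varphi(b)=\varphi(ab)=f(a)\varphi(b)$, so $\varphi(a)=f(a)$ whenever there is some $b$ with $\varphi(b)\neq 0$ — i.e. $\varphi=f$. Thus $\Delta(\mathcal{V}_f)$ is either empty or equals $\{f\}$, and since $f$ is itself multiplicative ($f(ab)=f(f(a)b)=f(a)f(b)$) and non-zero, we get $\Delta(\mathcal{V}_f)=\{f\}$. Hence character amenability of $\mathcal{V}_f$ reduces to $f$-amenability.

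Next I would exhibit the required functional $m$ on $\mathcal{V}_f^{\,*}$. The natural candidate is $m(g):=g(a_0)$ for a suitably chosen $a_0\in\mathcal{V}_f$; more precisely, fix $a_0\in\mathcal{V}$ with $f(a_0)=1$ (possible since $f\neq 0$), and define $m\in\mathcal{V}_f^{\,**}$ to be evaluation at $a_0$, i.e. $m(g)=g(a_0)$ for $g\in\mathcal{V}_f^{\,*}$. Then $m(f)=f(a_0)=1$, giving the normalization condition. For the second condition, one computes the module action of $\mathcal{V}_f$ on its dual: for $a\in\mathcal{V}_f$ and $g\in\mathcal{V}_f^{\,*}$, one has $(g\cdot a)(b)=g(ab)=g(f(a)b)=f(a)\,g(b)$, so $g\cdot a=f(a)\,g$ as elements of $\mathcal{V}_f^{\,*}$. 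Therefore $m(g\cdot a)=m(f(a)g)=f(a)\,m(g)=\varphi(a)m(g)$ with $\varphi=f$, exactly as required. Boundedness of $m$ is immediate since $\|m\|=\|a_0\|$.

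This establishes $f$-amenability, and since $\Delta(\mathcal{V}_f)=\{f\}$, it establishes character amenability. I do not anticipate a genuine obstacle here: the whole argument hinges on the identity $g\cdot a=f(a)g$, which follows directly from the degenerate multiplication $ab=f(a)b$, and the only mild care needed is pinning down that the sole character is $f$ (using $\|f\|\le 1$ to ensure $f$ is continuous, though that is given). One could alternatively phrase the conclusion by noting that $\mathcal{V}_f$ has a particularly simple ideal structure, but the direct computation above is the cleanest route.
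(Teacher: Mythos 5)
Your proof is correct and follows essentially the same route as the paper: both reduce the claim to $f$-amenability via $\Delta(\mathcal{V}_f)=\{f\}$, and both hinge on choosing an element with $f$-value $1$. The only difference is that you verify the Kaniuth--Lau--Pym definition directly by exhibiting the mean $m$ as evaluation at $a_0$, whereas the paper invokes their net characterization (citing \cite[Theorem 1.4]{l}) with the constant net $u_\alpha:=u$; you also spell out the computation showing $\Delta(\mathcal{V}_f)=\{f\}$, which the paper merely asserts.
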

\begin{proof} Clearly $f$ is a continuous homomorphisms
from $\mathcal{V}_{f}$ onto $ \mathbb{C}$. Choose $ u \in
\mathcal{V}_{f} $ with $ f(u)= 1$, and set $u_\alpha := u$ for each
$\alpha$. Then $a u_\alpha - f(a) u_\alpha = 0$ for all $a \in
\mathcal{V}_{f} $. Hence, $\mathcal{V}_{f} $ is $f$-amenable by
\cite [Theorem 1.4]{l}. It follows from $\Delta(\mathcal{V}_{f}) =
\{ f \}$ that $\mathcal{V}_{f} $ is character amenable.
\end{proof}

\end{document}